\documentclass{amsart}
\usepackage{stmaryrd}
\usepackage[all]{xy}

\newtheorem{thm}{Theorem}[section]
\newtheorem{proposition}[thm]{Proposition} 
\newtheorem{lemma}[thm]{Lemma}

\theoremstyle{definition} 
\newtheorem{definition}[thm]{Definition}

\theoremstyle{remark}
\newtheorem{remark}[thm]{Remark}

\newtheorem{example}[thm]{Example}

\newtheorem*{ack}{Acknowledgements}

\newcommand{\defequal}{:=}
\newcommand{\N}{\mathbb{N}}
\newcommand{\R}{{\mathbb R}}
\newcommand{\Z}{\mathbb{Z}}
\newcommand{\VB}{\mathcal{VB}}
\newcommand{\cE}{\mathcal{E}}
\newcommand{\cM}{\mathcal{M}}
\newcommand{\cB}{\mathcal{B}}
\newcommand{\cF}{\mathcal{F}}
\newcommand{\boundary}{\partial}
\newcommand{\iso}{\cong}
\newcommand{\totaldiff}{\mathcal{D}}
\newcommand{\id}{\mathrm{id}}
\newcommand{\trans}[2]{\mathcal{T}_{#1,#2}}
\newcommand{\ddt}{\frac{\partial}{\partial t}}
\newcommand{\dee}[1]{\mathrm{d} #1 \,}
\newcommand{\deed}[2]{\frac{\dee{#1}}{\dee{#2}}}
\newcommand{\cs}{\mathrm{cs}}
\newcommand{\ch}{\mathrm{ch}}
\newcommand{\bigO}{\mathcal{O}}
\newcommand{\adjoint}{\dagger}
\newcommand{\innerprod}[1]{\langle #1 \rangle}
\newcommand{\dbracket}[1]{\llbracket #1 \rrbracket}
\renewcommand{\top}{\mathrm{top}}
\DeclareMathOperator{\rank}{rk}
\DeclareMathOperator{\Hom}{Hom}
\DeclareMathOperator{\End}{End}
\DeclareMathOperator{\str}{str}
\DeclareMathOperator{\Ber}{Ber}

\title{Lie algebroid modules and representations up to homotopy}

\author{Rajan Amit Mehta}
\address{Department of Mathematics and Statistics\\
Smith College\\
44 College Lane\\
Northampton, MA 01063}
\email{rmehta@smith.edu}

\begin{document}

\begin{abstract}
We establish a relationship between two different generalizations of Lie algebroid representations: representation up to homotopy and Vaintrob's Lie algebroid modules. Specifically, we show that there is a noncanonical way to obtain a representation up to homotopy from a given Lie algebroid module, and that any two representations up to homotopy obtained in this way are equivalent in a natural sense. We therefore obtain a one-to-one correspondence, up to equivalence.
\end{abstract}

\keywords{Lie algebroid, representation up to homotopy, graded manifold, graded vector bundle, Q-manifold}
\subjclass[2010]{16E45, 
53D17, 
58A50
}

\maketitle

\section{Introduction}

A significant problem with the usual notion of Lie algebroid representation is the lack of a well-defined adjoint representation.  The effort to resolve this problem has led to a number of proposed generalizations of the notion of Lie algebroid representation \cite{vaintrob, elw, cra:vanest, cra-fer:secondary, gra-meh:vbalg, aba-cra:rephom}, with the most popular being that of \emph{representation up to homotopy} \cite{aba-cra:rephom}.

A representation up to homotopy of a Lie algebroid $A \to M$ is a chain complex of vector bundles $(\cE, \boundary)$ over $M$ equipped with an $A$-connection $\nabla$ and maps $\omega_i : \bigwedge^i \Gamma(A) \to \End_{1-i} (\cE)$ for $i \geq 2$, satisfying a series of coherence conditions, the first of which says that $\omega_2$ generates chain homotopies controlling the curvature of $\nabla$. Intuitively, one could think of a representation up to homotopy as a nice resolution of a representation on the (possibly singular) homology of the chain complex. Representations up to homotopy provide a useful framework for studying deformation theory \cite{aba-sha:deformations, aba-cra:rephom} and constructing characteristic classes \cite{cra-fer:secondary, gra-meh:vbalg} for Lie algebroids.

The primary purpose of this paper is to connect the notion of representation up to homotopy to that of \emph{Lie algebroid module}. The latter was introduced by Vaintrob \cite{vaintrob}, using the language of supergeometry. A module over a Lie algebroid $A$ is defined to be an $NQ$-vector bundle over $A[1]$. To our knowledge, Lie algebroid modules are the first generalized Lie algebroid representations to appear in the literature. More important, since Lie algebroid modules are defined in terms of vector bundles, it is straightforward to define many constructions, such as duals and tensor products, which one would expect a good theory of representations to have; in particular, the adjoint representation is just the tangent bundle.

Our main result is Theorem \ref{thm:correspondence}, which states that, up to isomorphism, there is a one-to-one correspondence between Lie algebroid modules and semibounded representations up to homotopy. This correspondence arises from a process called ``decomposition'' that allows one to obtain a representation up to homotopy from a Lie algebroid module. Although decomposition is noncanonical, different choices lead to representations up to homotopy that are equivalent in a certain way. This fact explains why the adjoint module (i.e. the tangent bundle) is canonical, whereas the adjoint representation up to homotopy is only well-defined up to equivalence.

Theorem \ref{thm:correspondence} extends a result due to Gracia-Saz and the author \cite{gra-meh:vbalg}, where it was shown that a similar correspondence holds between $2$-term representations up to homotopy and $\VB$-algebroids. Thus, one could interpret the result of this paper as asserting that Lie algebroid modules provide the natural extension of the category of $\VB$-algebroids to a category where tensor products exist.

Finally, we remark that most of the results of this paper are consequences of two structure theorems (Theorems \ref{thm:structure} and \ref{thm:structure2}) that are proven in the general setting of vector bundles over $\N$-graded manifolds.  Therefore, the results of this paper could be applied to the representation theory of other structures that have supergeometric descriptions, including Lie $n$-algebroids, $L_\infty$-algebras, and Courant algebroids.

The structure of the paper is as follows:
\begin{itemize}
     \item In \S\ref{sec:structure}, we study vector bundles over $N$-manifolds. We state and prove the structure theorems and introduce the notion of decomposition.
     \item In \S\ref{sec:rephom}, we recall the definitions of representation up to homotopy and gauge-equivalence.
     \item In \S\ref{sec:modules}, we recall the definition of Lie algebroid module, and we arrive at the main results relating Lie algebroid modules to representations up to homotopy.
     \item In \S\ref{sec:adjoint}, we consider the example of the adjoint module of a Lie algebroid $A$. The cohomology of $A$ with values in the adjoint module is isomorphic to the deformation cohomology of Crainic and Moerdijk \cite{cra-moe:deform}.
     \item In \S\ref{sec:tensor}, we describe the constructions of tensor product, direct sum, and dual. We show that there is a cohomology pairing for dual Lie algebroid modules.
     \item In \S\ref{sec:charclasses}, we recall the construction of characteristic classes in \cite{gra-meh:vbalg}, and show that this construction provides well-defined invariants of Lie algebroid modules.
\end{itemize}

\begin{ack}
	We thank David Li-Bland, Dmitry Roytenberg, and Jim Stasheff for helpful comments and suggestions on earlier versions of the paper. We additionally thank the anonymous referee for several suggestions that improved the exposition of the paper.
\end{ack}

\section{The structure of $N$-manifold vector bundles}\label{sec:structure}

Throughout this paper, we will be working in the category of graded manifolds. We refer the reader to \cite{mehta:thesis, meh:qalg, voronov:graded,roytenberg:graded,catt-schatz:super} (although, in contrast to \cite{voronov:graded}, we adopt a definition for which a function's parity agrees with its weight or degree). In particular, a brief introduction to vector bundles in the category of graded manifolds is given in \cite{meh:qalg}.

Let $\cM$ be a nonnegatively graded manifold, or $N$-manifold. Recall that there is a natural projection $\pi_{\cM}$ onto the underlying degree $0$ manifold $M$, where the pullback map $\pi_{\cM}^*$ identifies smooth functions on $M$ with degree $0$ functions on $\cM$. There is also a natural ``zero'' embedding $0_\cM : M \to \cM$, whose pullback map annihilates the ideal of positive degree functions on $\cM$. 

A vector bundle $\cB$ over $\cM$ is given by its sheaf of sections $\Gamma(\cB)$, which is, by definition, a locally free graded $C^\infty(\cM)$-module. We denote the rank of $\Gamma(\cB)$ in degree $i$ by $\rank_i(\cB)$. For simplicity, we suppose that $\cB$ is degree-bounded, in the sense that there exist integers $m, n$ such that $\rank_i(\cB) = 0$ for $i < m$ and for $i > n$. However, as noted below in Remark \ref{rmk:bounded}, the results of this section continue to hold if $\rank_i(\cB)$ is only bounded on one side. In any case, we emphasize that the total space of $\cB$ is allowed to be a $\Z$-graded (as opposed to $\N$-graded) manifold.

The pullback bundle $0_\cM^* \cB$ is a graded vector bundle over $M$. Any graded $C^\infty(M)$-module canonically splits as a direct sum of its homogeneous parts, so we may write $0_\cM^* \cB = \bigoplus E_i[-i]$, where $\{E_i\}$ is a collection of vector bundles over $M$.  We refer to $\cE \defequal 0_\cM^* \cB = \bigoplus E_i[-i]$ as the \emph{standard graded vector bundle} associated to $\cB$. Obviously, $\rank_i(\cB) = \rank(E_i)$.

For each integer $i$, let $\cF_i(\cB)$ denote the sub-$C^\infty(\cM)$-module of $\Gamma(\cB)$ generated by sections of degree $\leq i$. Since we are assuming that the rank of $\cB$ is bounded below by $m$ and above by $n$, we have that $\cF_i(\cB) = 0$ for $i < m$, and that $\cF_n(\cB) = \Gamma(\cB)$. We then have a filtration
\begin{equation}\label{eqn:filtration}
0 \subseteq \cF_m(\cB) \subseteq \cF_{m+1}(\cB) \subseteq \cdots \subseteq \cF_{n}(\cB) = \Gamma(\cB).
\end{equation}
The key observation we wish to make is that the quotient $\cF_i(\cB)/\cF_{i-1}(\cB)$ is naturally isomorphic to $C^\infty(\cM) \otimes_{C^\infty(M)} \Gamma(E_i[-i])$. The latter may be viewed geometrically as the space of sections of $\pi_\cM^* E_i[-i]$, so the sum $\bigoplus \cF_i(\cB)/\cF_{i-1}(\cB)$ is isomorphic to $\Gamma(\pi_\cM^* \cE)$.

By choosing splittings of the short exact sequence
\begin{equation}\label{eqn:sesbk}
\cF_{i-1}(\cB) \to \cF_i(\cB) \to \Gamma(\pi_\cM^*E_i[-i])
\end{equation}
for each $i$, we obtain an isomorphism $\cB \iso \pi_\cM^*  \cE$. Thus we have the following structure theorem:
\begin{thm}\label{thm:structure}
     Let $\cB$ be a vector bundle over $\cM$, and let $\cE \to M$ be the standard graded vector bundle associated to $\cB$. Then $\cB$ is noncanonically isomorphic to $\pi_\cM^*(\cE)$.
\end{thm}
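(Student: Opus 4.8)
The construction preceding the statement has already reduced the problem to the assertion that the short exact sequences \eqref{eqn:sesbk} admit splittings; the plan is to supply these splittings and then assemble them, working downward through the tower \eqref{eqn:tower}. First I would confirm that each cokernel $\cB_{k+1}$ is genuinely a vector bundle over $\cM$, i.e.\ that its sheaf of sections is locally free. In a local frame for $\cB_k$ adapted to the grading, the injection $\pi_\cM^* E_k[-k] \to \cB_k$ identifies $E_k[-k]$ with the frame elements sitting in the lowest degree $k$, so locally $\cB_{k+1}$ is freely generated by the remaining higher-degree frame elements. This both confirms local freeness and exhibits the local splitting that the global argument will glue.

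The heart of the argument is to produce a $C^\infty(\cM)$-linear splitting $s_k$ of the surjection $\cB_k \to \cB_{k+1}$ in \eqref{eqn:sesbk}, and I expect this to be the main obstacle: unlike the single-step statement over an ordinary manifold, one must know that short exact sequences of vector bundles over the graded manifold $\cM$ split. The structural reason is that a locally free $C^\infty(\cM)$-module is projective, so the identity of $\cB_{k+1}$ lifts through the surjection. Concretely, I would build $s_k$ from local data: choose local splittings over a cover of $M$, where they exist by the local freeness established above, and patch them using a partition of unity on $M$ pulled back via $\pi_\cM^*$ to degree $0$ functions on $\cM$. This patching is legitimate because if $p : \cB_k \to \cB_{k+1}$ denotes the projection, then any two splittings $s, s'$ satisfy $p \circ (s - s') = 0$, so $s - s'$ is a section of $\Hom(\cB_{k+1}, \pi_\cM^* E_k[-k])$; hence the set of splittings is an affine space over a $C^\infty(\cM)$-module, and a convex combination $\sum_\alpha \rho_\alpha s_\alpha$ with $\sum_\alpha \rho_\alpha = 1$ again satisfies $p \circ (\sum_\alpha \rho_\alpha s_\alpha) = \id$ since the $\rho_\alpha$ are degree $0$ and central.

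With a splitting $s_k$ in hand for each $k$, the sequence \eqref{eqn:sesbk} yields $\cB_k \iso \pi_\cM^* E_k[-k] \oplus \cB_{k+1}$. Starting from $\cB_{n+1} = 0$ and inducting downward along the tower, I would obtain
\[
\cB = \cB_m \iso \bigoplus_{k=m}^{n} \pi_\cM^* E_k[-k] \iso \pi_\cM^*\!\left( \bigoplus_k E_k[-k] \right) = \pi_\cM^* \cE,
\]
which is the claimed isomorphism. This construction also makes the word ``noncanonical'' in the statement transparent: the isomorphism depends on the chosen splittings $s_k$, and different choices differ by the $\Hom$-valued ambiguity noted above — precisely the data that the subsequent notion of decomposition is designed to track.
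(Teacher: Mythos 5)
Your proposal is correct and follows essentially the same route as the paper: the paper's proof \emph{is} the construction of the tower \eqref{eqn:tower} followed by choosing splittings of the sequences \eqref{eqn:sesbk} and assembling them into an isomorphism $\cB \iso \pi_\cM^*\cE$. You merely make explicit two points the paper leaves implicit --- local freeness of the cokernels $\cB_{k+1}$ and the existence of global splittings via a $\pi_\cM^*$-pulled-back partition of unity --- which is a sound filling-in rather than a different argument.
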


The statement of Theorem \ref{thm:structure} can be strengthened slightly. We have described a specific procedure for constructing isomorphisms from $\cB$ to $\pi_\cM^*(\cE)$, and we would like to characterize the isomorphisms that arise from this procedure, as well as to describe the difference between any two such isomorphisms. To address this issue, we first make the observation that $0_\cM^* \cB = \cE$ is canonically isomorphic to $0_\cM^* \pi_\cM^* \cE$, since $\pi_\cM \circ 0_\cM = \id_M$. The isomorphisms $\Theta: \cB \to \pi_\cM^*(\cE)$ obtained via splittings of \eqref{eqn:sesbk} are those that are filtration-preserving, and such isomorphisms satisfy the property that the following diagram commutes:
\begin{equation}\label{eqn:0bundle}
     \xymatrix{ \cE \ar_{\tilde{0}_\cM}[d] \ar^\id[r] & \cE \ar^{\tilde{0}_\cM}[d] \\ \cB \ar^-\Theta[r] & \pi_\cM^*(\cE)}
\end{equation}
Here, the vertical maps are the natural maps associated to pullback bundles.

On the other hand, by considering changes of splittings of the sequences \eqref{eqn:sesbk}, we see that the difference between any two filtration-preserving isomorphisms is given by a collection of maps $\sigma_{k,i} : \Gamma(E_k) \to C^\infty_{i}(\cM) \otimes \Gamma(E_{k-i})$ for $1 \leq i \leq k-m$. The associated automorphism of $\pi_\cM^*(\cE)$ takes $\varepsilon \in \Gamma(E_k)$ to $\varepsilon + \sum_{i=1}^{k-m} \sigma_{k,i}(\varepsilon)$. All automorphisms of $\pi_\cM^*(\cE)$ fixing the image of $\tilde{0}_\cM$ are of this form. In summary, we have the following result, which refines Theorem \ref{thm:structure}:

\begin{thm}\label{thm:structure2}
     Let $\cB$ be a vector bundle over $\cM$, and let $\cE \to M$ be the standard graded vector bundle associated to $\cB$. An isomorphism $\Theta: \cB \to \pi_\cM^*(\cE)$ is filtration-preserving if and only if the diagram \eqref{eqn:0bundle} commutes.
\end{thm}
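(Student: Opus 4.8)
The plan is to reduce the statement to an equivalent condition that is easy to test, namely that an isomorphism $\Theta : \cB \to \pi_{\cM}^*(\cE)$ satisfies \eqref{eqn:0bundle} if and only if $0_\cM^* \Theta = \id_\cE$, and then to verify each implication against this reformulation. For the reduction, observe that the vertical maps $\tilde{0}_\cM$ in \eqref{eqn:0bundle} are the canonical bundle maps $\cE = 0_\cM^*\cB \to \cB$ and $\cE = 0_\cM^*\pi_{\cM}^*(\cE) \to \pi_{\cM}^*(\cE)$ attached to the two pullbacks, and that these are natural: for any morphism $\Theta$ over $\cM$ one has $\Theta \circ \tilde{0}_\cM = \tilde{0}_\cM \circ (0_\cM^* \Theta)$. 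Since $\tilde{0}_\cM$ is fiberwise injective, it is left-cancellable among endomorphisms of $\cE$, so \eqref{eqn:0bundle} commutes precisely when $0_\cM^* \Theta = \id_\cE$.

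I would first dispatch the forward implication. An isomorphism obtained from splittings of \eqref{eqn:sesbk} is, by construction, filtered with respect to the tower \eqref{eqn:tower} and realizes the canonical identifications $\ker(\cB_k \to \cB_{k+1}) \iso \pi_{\cM}^* E_k[-k]$ on the associated graded. The off-diagonal contributions introduced by the choice of splittings carry strictly positive-degree coefficients, so they are annihilated by $0_\cM^*$, while the diagonal contributions restrict to the identity on each $E_k$ under the canonical identification $0_\cM^* \pi_{\cM}^* E_k = E_k$. Hence $0_\cM^* \Theta = \id_\cE$, so \eqref{eqn:0bundle} commutes; this recovers the assertion already made in the discussion preceding the theorem.

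For the converse I would fix one reference isomorphism $\Theta_0$ arising from a choice of splittings, which exists by Theorem \ref{thm:structure}, and write an arbitrary $\Theta$ satisfying \eqref{eqn:0bundle} as $\Theta = \Phi \circ \Theta_0$, where $\Phi \defequal \Theta \circ \Theta_0^{-1}$ is an automorphism of $\pi_{\cM}^*(\cE)$. Both $\Theta$ and $\Theta_0$ satisfy $0_\cM^*(-) = \id_\cE$, so $0_\cM^* \Phi = \id_\cE$; equivalently, $\Phi$ fixes the image of $\tilde{0}_\cM$. By the analysis preceding the theorem, such a $\Phi$ is exactly of the form $\varepsilon \mapsto \varepsilon + \sum_{i=1}^{k-m} \sigma_{k,i}(\varepsilon)$ for $\varepsilon \in \Gamma(E_k)$, with $\sigma_{k,i} : \Gamma(E_k) \to C^\infty_{i}(\cM) \otimes \Gamma(E_{k-i})$. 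It then remains to show that $\Theta = \Phi \circ \Theta_0$ is itself obtainable from splittings, that is, that $\Phi$ can be absorbed into a change of the splittings defining $\Theta_0$.

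This last point carries the real content, so I expect it to be the main obstacle and would spend the bulk of the effort there. Transporting everything through $\Theta_0$, I may assume $\cB = \pi_{\cM}^*(\cE)$ with its standard tower and $\Theta_0 = \id$, coming from the obvious direct-sum splittings. A general splitting of \eqref{eqn:sesbk} at level $k$ then differs from the standard one by freely choosable maps $g_{k,j} : \Gamma(E_j) \to C^\infty_{j-k}(\cM) \otimes \Gamma(E_k)$ for $j > k$. The induced automorphism strictly lowers the grading index and is therefore unipotent and lower-triangular with respect to the grading of $\cE$, as is $\Phi$. I expect the assignment $\{g_{k,j}\} \mapsto \{\sigma_{k,i}\}$ to be triangular and unipotent, hence bijective, which I would confirm by induction on the amount $i$ by which the degree is lowered: the leading term of $\sigma_{k,i}$ is the level-$(k-i)$ splitting change, and the remaining corrections are compositions of splitting changes with strictly smaller degree drop, already determined at earlier stages. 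Surjectivity of this assignment shows that every admissible $\Phi$ is realized by an honest change of splittings, which completes the converse and hence the proof.
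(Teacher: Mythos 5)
Your proof is correct and follows essentially the same route as the paper, whose ``proof'' is the discussion preceding the theorem: the forward direction via the observation that off-diagonal terms of a splitting-derived $\Theta$ have positive-degree coefficients killed by $0_\cM^*$, and the converse via comparing $\Theta$ with a reference $\Theta_0$ and identifying automorphisms fixing the image of $\tilde{0}_\cM$ with the maps $\sigma_{k,i}$. The only difference is that you make explicit, via the filtration-by-degree-drop induction, the surjectivity of the assignment from changes of splittings to such automorphisms --- a step the paper asserts without proof --- and your argument for it is sound.
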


For later use, we introduce the following terminology.
\begin{definition}\label{dfn:decomp}
     A \emph{decomposition} of a vector bundle $\cB \to \cM$ is a choice of isomorphism $\Theta: \cB \to \pi_\cM^*(\cE)$ such that \eqref{eqn:0bundle} commutes.
\end{definition}

\begin{definition}\label{dfn:stat}
     A \emph{statomorphism} of a vector bundle $\cB \to \cM$ is a vector bundle automorphism $\Psi$ such that
\begin{equation}
     \xymatrix{ \cE \ar_{\tilde{0}_\cM}[d] \ar^\id[r] & \cE \ar^{\tilde{0}_\cM}[d] \\ \cB \ar^-\Psi[r] & \cB}
\end{equation}
commutes.
\end{definition}

Using the terminology of Definitions \ref{dfn:decomp} and \ref{dfn:stat}, we can restate the above structure theorems as follows. Theorem \ref{thm:structure} gives the existence of decompositions, and Theorem \ref{thm:structure2} says that the statomorphisms (which form a group) act freely and transitively on the space of decompositions. As noted above, a statomorphism is given by a collection of maps $\sigma_{k,i} : \Gamma(E_k) \to C^\infty_{i}(\cM) \otimes \Gamma(E_{k-i})$ for $1 \leq i \leq k-m$.

\begin{remark}
	The term ``statomorphism'' is due to Gracia-Saz and Mackenzie \cite{gra-mac}, who used it to describe automorphisms of double and triple vector bundles that preserve the underlying structure bundles. We use the term here because there is a natural way to view double vector bundles as graded vector bundles (for example, see \cite{gra-rot,meh:qalg,royt:thesis}), and in this case our definition of statomorphism coincides with theirs.
\end{remark}

\begin{remark}\label{rmk:bounded}
The structure theorems in this section can be extended to the case where $\rank_i(\cB)$ is only bounded one side. In this ``semibounded'' case, the filtration in \eqref{eqn:filtration} would extend infinitely in one direction. By choosing splittings of the short exact sequences \eqref{eqn:sesbk}, we can obtain an isomorphism $\cB \iso \pi_\cM^* \cE$ as a colimit of isomorphisms.
\end{remark}

\section{Representations up to homotopy of Lie algebroids}\label{sec:rephom}

Let $A \to M$ be a Lie algebroid. Then $\Omega(A) \defequal \bigwedge \Gamma(A^*)$ is the algebra of \emph{$A$-forms}, equipped with the differential $d_A$.

Let $\cE = \bigoplus E_i[-i]$ be a graded vector bundle over $M$. The space of \emph{$\cE$-valued $A$-forms}
\[ \Omega(A;\cE) \defequal \Omega(A) \otimes_{C^\infty(M)} \Gamma(\cE)\]
is endowed with a $\Z$-grading where the subspace $\Omega^p(A) \otimes \Gamma(E_i[-i])$ is homogeneous of degree $p+i$. 

\begin{definition}\label{dfn:rephom}
     A \emph{representation up to homotopy}, or \emph{$\infty$-representation}, of $A$ on $\cE$ is a degree $1$ operator $\totaldiff$ on $\Omega(A;\cE)$ such that $\totaldiff^2=0$ and such that the Leibniz rule
\begin{equation}\label{eqn:leibniz}
   \totaldiff(\alpha \omega) = (d_A \alpha) \omega + (-1)^{p} \alpha (\totaldiff \omega)    
\end{equation}
holds for $\alpha \in \Omega^p(A)$ and $\omega \in \Omega(A;\cE)$.
\end{definition}

There is a natural projection map $\mu: \Omega(A;\cE) \to \Gamma(\cE)$ for which the kernel is $\bigoplus_{p>0} \Omega^p(A) \otimes \Gamma(\cE)$. If $\totaldiff$ is an $\infty$-representation, then the Leibniz rule implies that $\ker \mu$ is $\totaldiff$-invariant. Therefore, there is an induced differential $\boundary$ on $\Gamma(\cE)$, defined by the property that the following diagram commutes:
\[ \xymatrix{ \Omega(A;\cE) \ar^\totaldiff[r] \ar_\mu[d] &  \Omega(A;\cE) \ar_\mu[d] \\ \Gamma(\cE) \ar^\boundary[r] & \Gamma(\cE)}\]

If $\totaldiff$ and $\totaldiff'$ are $\infty$-representations of $A$ on $\cE$ and $\cE'$, respectively, then a \emph{morphism} from $\totaldiff$ to $\totaldiff'$ is an $\Omega(A)$-module morphism $\phi: \Omega(A;\cE) \to \Omega(A;\cE')$ such that $\phi \circ \totaldiff = \totaldiff' \circ \phi$. In this case, $\phi$ induces a chain map from $\Gamma(\cE)$ to $\Gamma(\cE')$.

As usual, an invertible morphism of $\infty$-representations is called an isomorphism. However, in the case where the graded vector bundle $\cE$ is fixed, there is a slightly more refined notion, which we call \emph{gauge equivalence}.

\begin{definition}
     A \emph{gauge transformation} of $\Omega(A;\cE)$ is a degree-preserving $\Omega(A)$-module automorphism $u$ such that the following diagram commutes:
\[ \xymatrix{ \Omega(A;\cE) \ar^u[r] \ar_\mu[d] &  \Omega(A;\cE) \ar_\mu[d] \\ \Gamma(\cE) \ar^\id[r] & \Gamma(\cE)}\]
\end{definition}

Under a gauge transformation, an $\infty$-representation $\totaldiff$ transforms as $\totaldiff' = u^{-1} \totaldiff u$. Two $\infty$-representations that are related by a gauge transformation are said to be \emph{gauge-equivalent}. Note that gauge-equivalent $\infty$-representations induce the same differential $\boundary$ on $\Gamma(\cE)$.

\section{Lie algebroid modules}\label{sec:modules}

Let $A \to M$ be a Lie algebroid.

\begin{definition}[\cite{vaintrob}] \label{dfn:module}
     A \emph{Lie algebroid module} over $A$, or \emph{$A$-module}, is a vector bundle $\cB \to A[1]$ equipped with a degree $1$ operator $Q$ on $\Gamma(\cB)$ such that $Q^2=0$ and such that the Leibniz rule
\[ Q(\alpha \beta) = (d_A \alpha) \beta + (-1)^{p} \alpha (Q \beta) \]
holds for $\alpha \in C^\infty_p(A[1]) = \Omega^p(A)$ and $\beta \in \Gamma(\cB)$. 
\end{definition}
A \emph{morphism} of $A$-modules from $(\cB, Q)$ to $(\cB', Q')$ is a linear map $\psi: \cB \to \cB'$, covering the identity map on $A[1]$, such that $\psi Q = Q' \psi$.

Recall that, for a fixed vector bundle $\cB \to A[1]$, we have defined in Definition \ref{dfn:stat} a distinguished class of automorphisms, called \emph{statomorphisms}. We will say that two $A$-module structures $Q$ and $Q'$ on $\cB$ are \emph{statomorphic} if there exists a statomorphism $\psi: \cB \to \cB$ such that $\psi Q = Q' \psi$.

\begin{remark}
The operator $Q$ in the definition of Lie algebroid module can be equivalently viewed as a linear homological vector field whose base vector field is $d_A$. In other words, a Lie algebroid module is a special case of a \emph{$Q$-vector bundle}, i.e.\ a vector bundle in the category of $Q$-manifolds.
\end{remark}

\begin{remark}
	Of particular interest is the special case where the total space of $\cB$ is concentrated in degrees $0$ and $1$ (so that $\rank_i(\cB)$ vanishes except for $i=-1,0$). In this case, $\cB = D[1]$ for some vector bundle $D \to E$. The fact that $\cB$ also has a vector bundle structure over $A[1]$ implies that $D$ is a double vector bundle. In this case, an $A$-module structure on $\cB$ is equivalent to a $\VB$-algebroid structure on $D$ over $A$ (see \cite{gra-meh:vbalg}).
\end{remark}

Let $\cE = \bigoplus E_i[-i]$ be a graded vector bundle over $M$.  Assume that $\cE$ is bounded in degree (or semibounded, c.f.\ Remark \ref{rmk:bounded}). Initially, we consider $A$-module structures of the form $\pi_A^* \cE \to A[1]$, where $\pi_A$ is the projection map from $A[1]$ to $M$. In this case, the module of sections $\Gamma(\pi_A^*\cE)$ is canonically isomorphic to $\Omega(A) \otimes_{C^\infty(M)} \Gamma(\cE) = \Omega(A;\cE)$. Under this identification, Definitions \ref{dfn:rephom} and \ref{dfn:module} become identical, so we immediately have the following:
\begin{lemma}\label{lemma:correspondence}
$\infty$-representations of $A$ on $\cE$ are in one-to-one correspondence with $A$-modules of the form $\pi_A^* \cE$.
\end{lemma}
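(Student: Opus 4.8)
The plan is to make the canonical identification $\Gamma(\pi_A^* \cE) \iso \Omega(A;\cE)$ completely explicit and then observe that, under this identification, Definitions \ref{dfn:rephom} and \ref{dfn:module} are verbatim the same. First I would recall that $C^\infty(A[1]) = \Omega(A)$, so that for the pullback bundle $\pi_A^* \cE$ the sheaf of sections is
\[ \Gamma(\pi_A^* \cE) = C^\infty(A[1]) \otimes_{C^\infty(M)} \Gamma(\cE) = \Omega(A) \otimes_{C^\infty(M)} \Gamma(\cE) = \Omega(A;\cE),\]
and this is an isomorphism of graded $\Omega(A)$-modules. The content of the lemma is then that the two notions of ``degree $1$ operator squaring to zero and satisfying the Leibniz rule over $d_A$'' attached to the two sides coincide.

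Concretely, I would define the correspondence by sending an $\infty$-representation $\totaldiff$ on $\Omega(A;\cE)$ to the operator $Q$ on $\Gamma(\pi_A^* \cE)$ that is identified with $\totaldiff$ under the isomorphism above, and conversely. Since the isomorphism is degree-preserving and respects the $\Omega(A)$-module structure, $\totaldiff$ has degree $1$ if and only if $Q$ does, and $\totaldiff^2 = 0$ if and only if $Q^2 = 0$. The Leibniz rule \eqref{eqn:leibniz} for $\totaldiff$ and the Leibniz rule in Definition \ref{dfn:module} for $Q$ are the same identity once $\alpha \in \Omega^p(A)$ is read as $\alpha \in C^\infty_p(A[1])$; in particular both involve the same operator $d_A$ and the same sign $(-1)^p$. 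Thus the two assignments are mutually inverse, giving the claimed one-to-one correspondence.

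The only point requiring care --- and the main (if minor) obstacle --- is the bookkeeping of gradings and signs. One must check that the $\Z$-grading on $\Omega(A;\cE)$ in which $\Omega^p(A) \otimes \Gamma(E_i[-i])$ is homogeneous of degree $p+i$ is exactly the intrinsic grading of $\pi_A^* \cE$ transported across the isomorphism, so that ``degree $1$'' has the same meaning on both sides and the exponent in the sign $(-1)^p$ agrees with the form-degree appearing in Definition \ref{dfn:module}. Once this grading match is verified, no further computation is needed, and the correspondence is immediate.
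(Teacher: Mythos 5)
Your proof is correct and follows exactly the paper's own argument: the paper likewise proves Lemma \ref{lemma:correspondence} by canonically identifying $\Gamma(\pi_A^*\cE)$ with $\Omega(A) \otimes_{C^\infty(M)} \Gamma(\cE) = \Omega(A;\cE)$ and observing that Definitions \ref{dfn:rephom} and \ref{dfn:module} then become identical. Your added care about matching the $\Z$-gradings and the sign $(-1)^p$ is a reasonable elaboration of the same one-line argument, not a different route.
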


In light of Theorem \ref{thm:structure2}, we have a straightforward way to obtain an $\infty$-rep\-re\-sen\-ta\-tion from an arbitrary $A$-module $\cB \to A[1]$; one simply needs to choose a decomposition $\Theta: \cB \to \pi^*_A \cE$ (see Definition \ref{dfn:decomp}), and then the operator $Q$ on $\Gamma(\cB)$ induces an $\infty$-representation $\totaldiff \defequal \Theta \circ Q \circ \Theta^{-1}$ of $A$ on $\cE$. 

Furthermore, we observe that a gauge transformation of $\Omega(A;\cE)$ is precisely the same thing as an automorphism of $\pi_A^*(\cE)$ that preserves the image of $\tilde{0}_A$. In other words, changes of decomposition correspond to gauge transformations of $\totaldiff$. We now have our main result:

\begin{thm}\label{thm:correspondence}
Let $A \to M$ be a Lie algebroid.
\begin{enumerate}
     \item There is a one-to-one correspondence between isomorphism classes of $A$-modules and isomorphism classes of (semi)bounded $\infty$-representations of $A$.
     \item For any (semi)bounded graded vector bundle $\cE = \bigoplus E_i \to M$, there is a one-to-one correspondence between statomorphism classes of $A$-modules with standard graded vector bundle $\cE$ and gauge-equivalence classes of $\infty$-representations of $A$ on $\cE$.
\end{enumerate}
\end{thm}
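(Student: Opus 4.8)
The plan is to reduce both statements to the combination of Lemma \ref{lemma:correspondence} with the structure theorems of \S\ref{sec:structure}, using throughout the dictionary that a vector bundle morphism over $A[1]$ is the same thing as a degree-preserving $C^\infty(A[1]) = \Omega(A)$-module morphism on sections. Under this dictionary, morphisms of $A$-modules correspond exactly to morphisms of $\infty$-representations, statomorphisms of $\pi_A^*\cE$ correspond exactly to gauge transformations of $\Omega(A;\cE)$ (as already observed in the text preceding the theorem), and the conjugation $\totaldiff \defequal \Theta Q \Theta^{-1}$ by a decomposition $\Theta$ carries $A$-module structures to $\infty$-representations, since $\Theta$ is $\Omega(A)$-linear (preserving the Leibniz rule) and $Q^2 = 0$ forces $\totaldiff^2 = 0$.

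For part (1), I would define the forward map on objects by sending an $A$-module $(\cB, Q)$ to the $\infty$-representation $\totaldiff = \Theta Q \Theta^{-1}$ on its canonical standard graded bundle $\cE = 0^*_{A[1]}\cB$, after choosing any decomposition $\Theta \colon \cB \to \pi_A^*\cE$ (Theorem \ref{thm:structure}); the backward map sends an $\infty$-representation $\totaldiff$ on $\cE$ to the $A$-module $(\pi_A^*\cE, \totaldiff)$ furnished by Lemma \ref{lemma:correspondence}. To see these descend to isomorphism classes and are mutually inverse, I would first check that an $A$-module isomorphism $\psi \colon (\cB, Q) \to (\cB', Q')$ induces the $\infty$-representation isomorphism $\phi = \Theta' \psi \Theta^{-1}$, the intertwining identity following from a one-line conjugation computation, and that a change of decomposition produces a gauge-equivalent, hence isomorphic, $\infty$-representation. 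Conversely an isomorphism of $\infty$-representations is by definition an invertible $\Omega(A)$-module map intertwining the differentials, which under the dictionary is precisely an isomorphism of the associated $A$-modules. Finally $\Theta^{-1}$ witnesses $(\pi_A^*\cE, \totaldiff) \iso (\cB, Q)$ and the choice $\Theta = \id$ recovers $\totaldiff$ from $(\pi_A^*\cE, \totaldiff)$, so the two maps are inverse bijections on isomorphism classes.

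For part (2) I would fix $\cE$ and reduce everything to module structures on the single bundle $\pi_A^*\cE$. By Theorem \ref{thm:structure} every $\cB$ with standard graded bundle $\cE$ admits a decomposition $\Theta$, and transporting $Q$ along $\Theta$ gives an $A$-module structure on $\pi_A^*\cE$; using Theorem \ref{thm:structure2} (two decompositions differ by a statomorphism) I would verify that this assignment is well defined on statomorphism classes and identifies statomorphism classes of $A$-modules with standard graded bundle $\cE$ with statomorphism classes of $A$-module structures on $\pi_A^*\cE$. For the latter, $A$-module structures on $\pi_A^*\cE$ are exactly $\infty$-representations on $\cE$ by Lemma \ref{lemma:correspondence}, and two of them are statomorphic if and only if they are conjugate by a statomorphism of $\pi_A^*\cE$, i.e.\ by a gauge transformation, which is exactly gauge-equivalence; this makes the correspondence essentially tautological once the reduction is in place.

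The routine content is the conjugation bookkeeping, but the step I expect to require the most care is the reduction in part (2): one must pin down what ``statomorphism class of $A$-modules with standard graded bundle $\cE$'' means when the underlying bundle $\cB$ is allowed to vary (statomorphisms having been defined only between structures on a fixed $\cB$), and check that transporting along different decompositions lands in a single well-defined statomorphism class on $\pi_A^*\cE$ --- this is where Theorem \ref{thm:structure2} is essential, since it guarantees the transition isomorphisms are statomorphisms rather than arbitrary isomorphisms. A minor point to keep honest throughout is that the intertwining relations $\psi Q = Q'\psi$ force all morphisms to be degree $0$, so the module-theoretic and $\infty$-representation-theoretic notions of morphism match on the nose.
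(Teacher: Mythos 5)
Your proposal is correct and follows essentially the same route as the paper, which presents Theorem \ref{thm:correspondence} as an immediate consequence of Lemma \ref{lemma:correspondence}, the conjugation $\totaldiff = \Theta \circ Q \circ \Theta^{-1}$ along a decomposition furnished by Theorem \ref{thm:structure2}, and the identification of gauge transformations of $\Omega(A;\cE)$ with statomorphisms of $\pi_A^*\cE$; your write-up simply makes explicit the bookkeeping (well-definedness on classes, mutual inverses, and the part-(2) reduction to module structures on $\pi_A^*\cE$) that the paper leaves implicit. One small quibble: the intertwining relation $\psi Q = Q' \psi$ does not by itself force a morphism to have degree $0$ --- that is built into the definitions of graded vector bundle morphism and gauge transformation --- but nothing in your argument actually depends on this misstatement.
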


\section{Adjoint module and deformation cohomology}\label{sec:adjoint}

Let $A \to M$ be a Lie algebroid, and let $\cB \to A[1]$ be an $A$-module.
\begin{definition}[\cite{vaintrob}]
     The \emph{cohomology of $A$ with values in $\cB$}, denoted $H^\bullet(A;\cB)$, is the cohomology of the complex $(\Gamma(\cB), Q)$.
\end{definition}
The results of \S\ref{sec:modules} imply that $H^\bullet(A;\cB)$ is isomorphic to the cohomology of $A$ with values in any $\infty$-representation arising from $\cB$. 

\begin{example}[Adjoint module]
The \emph{adjoint module} of $A$ is the tangent bundle $T(A[1])$. The sections of $T(A[1])$ are vector fields on $A[1]$ (i.e., graded derivations of the algebra $\Omega(A)$), with the operator $Q \defequal [d_A, \cdot]$. The low-degree cohomology with values in the adjoint module was briefly described in \cite{vaintrob}, but we provide additional details here.

We first consider degree $-1$. The degree $-1$ derivations of $\Omega(A)$ are precisely the contraction operators $\iota_X$ for $X \in \Gamma(A)$. The Lie derivative operator $L_X \defequal [d_A, \iota_X]$ vanishes if and only if $X$ is in the center of the Lie algebra $\Gamma(A)$. Therefore $H^{-1}(A; TA[1])$ can be identified with the center of $\Gamma(A)$.

Next, we consider degree $0$. The degree $0$ derivations of $\Omega(A)$ are in one-to-one correspondence with linear vector fields on $A$. A degree $0$ derivation $\phi$ satisfies the equation $[d_A, \phi] = 0$ if and only if $\phi$ corresponds to a \emph{morphic} vector field \cite{mac-xu:mult, meh:qalg}, i.e.\ an infinitesimal automorphism of $A$. The coboundaries are the Lie derivatives $L_X$, which may be considered inner infinitesimal automorphisms. The cohomology $H^0(A;TA[1])$ is then the space of outer infinitesimal automorphisms.

Let $\chi$ be a degree $1$ derivation, and consider the operator $d_A + \chi h$, where $h$ is a formal parameter. Then $(d_A + \chi h)^2$ vanishes to order $h^2$ if and only if $[d_A, \chi] = 0$. Thus, the degree $1$ cocycles correspond to infinitesimal deformations of the Lie algebroid structure on $A$. The coboundaries consist of those ``trivial'' infinitesimal deformations that come from pulling back $d_A$ along infinitesimal bundle automorphisms of $A$. In this sense, $H^1(A;TA[1])$ controls the infinitesimal deformations of $A$.

The degree $2$ cohomology arises when one wants to extend an infinitesimal deformation to higher order. For example, suppose that $\chi$, as above, is a degree $1$ cocycle. Then $\chi^2$ is a degree $2$ cocycle. If $\chi^2 = -[d_A, \nu]$, then $(d_A + \chi h + \nu h^2)^2$ vanishes to order $h^3$. More generally, given a formal operator $d_A + \sum_{i=1}^k \chi_i h^i$ whose square vanishes to order $h^k$, one can find a $\chi_{k+1}$ such that $(d_A + \sum_{i=1}^{k+1} \chi_i h^i)^2$ vanishes to order $h^{k+1}$ if an obstruction in $H^2(A;TA[1])$, depending on the $\chi_i$, vanishes.
\end{example}

It was observed by Crainic and Moerdijk \cite{cra-moe:deform} that the differential graded Lie algebra of derivations of $\Omega(A)$ is isomorphic, up to a degree shift, with their \emph{deformation complex} of $A$, consisting of $k$-ary antisymmetric brackets on $\Gamma(A)$ satisfying Leibniz rules. The isomorphism can be described in terms of derived brackets, as follows. Let $\chi$ be a degree $k$ derivation of $\Omega(A)$. Then we may define a $(k+1)$-ary bracket $\dbracket{\cdot,\dots,\cdot}_\chi$ on $\Gamma(A)$ by 
\[ \iota_{\dbracket{X_1,\dots,X_{k+1}}_\chi} = [[\cdots [[\chi, \iota_{X_1}], \iota_{X_2}], \cdots], \iota_{X_{k+1}}]\]
for $X_1, \dots, X_{k+1} \in \Gamma(A)$. Antisymmetry of $\dbracket{\cdot,\dots,\cdot}_\chi$ follows from the Jacobi identity and the fact that contraction operators commute. The Leibniz rule follows immediately from the fact that the Lie bracket of derivations satisfies the Leibniz rule.

\section{Tensor products, direct sums, and duals}\label{sec:tensor}

Let $A \to M$ be a Lie algebroid, and let $(\cB_1, Q_1)$ and $(\cB_2, Q_2)$ be $A$-modules. Then there is a natural $A$-module structure on $\cB_1 \otimes \cB_2$, given by
\[ Q(\beta_1 \otimes \beta_2) = (Q_1 \beta_1) \otimes \beta_2 + (-1)^{|\beta_1|} \beta_1 \otimes (Q_2 \beta_2)\]
for $\beta_i \in \Gamma(\cB_i)$. The tensor product is symmetric, in the sense that the Koszul isomorphism from $\cB_1 \otimes \cB_2$ to $\cB_2 \otimes \cB_1$, taking $\beta_1 \otimes \beta_2$ to $(-1)^{|\beta_1||\beta_2|} \beta_2 \otimes \beta_1$, is an $A$-module isomorphism.

Similarly, the direct sum $\cB_1 \oplus \cB_2$ inherits an $A$-module structure, given by
\[ Q(\beta_1 + \beta_2) = Q_1 \beta_1 + Q_2 \beta_2.\]

Next, we consider duals. Let $(\cB,Q)$ be an $A$-module, and let $\cB^* \to A[1]$ be the vector bundle dual to $\cB$. We denote by $\innerprod{\cdot,\cdot}$ the pairing taking $\Gamma(\cB^*) \otimes \Gamma(\cB)$ to $C^\infty(A[1]) = \Omega(A)$. The induced $A$-module structure $Q^*$ on $\cB^*$ is uniquely determined by the equation
\begin{equation}\label{eqn:dual}
 d_A \innerprod{b,\beta} = \innerprod{Q^*b, \beta} + (-1)^{|b|} \innerprod{b,Q \beta}     
\end{equation}
for $b \in \Gamma(\cB^*)$ and $\beta \in \Gamma(\cB)$. Note that requiring \eqref{eqn:dual} to hold is equivalent to asking that the pairing $\innerprod{\cdot,\cdot}$ be an $A$-module morphism from $\cB^* \otimes \cB$ to the trivial rank $1$ $A$-module $(A[1] \times \R, d_A)$.

Dualization takes vector bundles that are bounded in degree from below to those that are bounded from above, and vice versa. The property of being bounded on both sides is preserved by dualization.

\begin{proposition}\label{prop:pairing}
     The pairing between $\Gamma(\cB)$ and $\Gamma(\cB^*)$ induces a well-defined cohomology pairing $H^\bullet(A;\cB^*) \otimes H^\bullet(A;\cB) \to H^\bullet(A)$.
\end{proposition}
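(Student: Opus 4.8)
The plan is to build the pairing directly on cocycles and verify the two properties that make it descend to cohomology — that the pairing of cocycles is $d_A$-closed, and that it becomes exact as soon as either input is a coboundary — using nothing beyond the defining identity \eqref{eqn:dual}. Conceptually there is very little to do: \eqref{eqn:dual} is exactly the statement, noted just after its introduction, that $\innerprod{\cdot,\cdot}$ is a morphism of $A$-modules from $\cB^* \otimes \cB$ to the trivial module $(A[1]\times\R, d_A)$, whose cohomology is $H^\bullet(A)$. So the whole proposition is the formal observation that a chain map induces a map on cohomology, and the only genuine work is keeping track of graded signs.

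Concretely, given cocycles $b \in \Gamma(\cB^*)$ and $\beta \in \Gamma(\cB)$, i.e.\ $Q^* b = 0$ and $Q\beta = 0$, I would define the pairing of their classes to be $[\innerprod{b,\beta}]$. Closedness is immediate from \eqref{eqn:dual}: one has $d_A\innerprod{b,\beta} = \innerprod{Q^* b,\beta} + (-1)^{|b|}\innerprod{b,Q\beta} = 0$, and since the pairing is homogeneous of degree $|b|+|\beta|$, the resulting class lands in the expected graded piece of $H^\bullet(A)$. Independence of the choice of representatives follows in the same way: if $b = Q^* c$, then \eqref{eqn:dual} together with $Q\beta = 0$ gives $\innerprod{b,\beta} = d_A\innerprod{c,\beta}$, which is exact; and if $\beta = Q\gamma$, then $Q^* b = 0$ forces $\innerprod{b,\beta} = (-1)^{|b|} d_A\innerprod{b,\gamma}$, again exact. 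Hence $[\innerprod{b,\beta}]$ depends only on $[b]$ and $[\beta]$.

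Finally, since $\innerprod{\cdot,\cdot}$ is $\R$-bilinear — indeed $\Omega(A)$-bilinear in the graded sense — at the cochain level, the induced map respects sums and scalars and therefore factors through the tensor product, yielding the desired pairing $H^\bullet(A;\cB^*) \otimes H^\bullet(A;\cB) \to H^\bullet(A)$, which is moreover $H^\bullet(A)$-bilinear. I do not anticipate any real obstacle; the argument is entirely formal once \eqref{eqn:dual} is available. The only points requiring mild care are the graded signs in the two exactness computations and the tacit identification of the cohomology of the trivial rank-one module with $H^\bullet(A)$, both of which are routine.
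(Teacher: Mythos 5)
Your proof is correct and follows the same route as the paper: the paper's own argument likewise uses \eqref{eqn:dual} directly to show $\innerprod{b,\beta}$ is closed when both inputs are closed and exact when one is exact and the other closed, and your explicit sign computations (e.g.\ $\innerprod{b,\beta} = (-1)^{|b|} d_A \innerprod{b,\gamma}$ when $\beta = Q\gamma$) merely spell out what the paper states in one line. Nothing is missing.
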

\begin{proof}
From \eqref{eqn:dual}, we have that $\innerprod{b,\beta}$ is closed if both $b$ and $\beta$ are closed, and that $\innerprod{b,\beta}$ is exact if one of $b$ or $\beta$ is exact and the other is closed. Therefore, the map taking $[b] \otimes [\beta]$ to $[\innerprod{b,\beta}]$ is well-defined at the level of cohomology.
\end{proof}

In the case where $M$ is compact and orientable, one can obtain $\R$-valued pairings parametrized by cohomology with values in the (canonically decomposed) Berezinian $A$-module $\Ber \defequal \pi_A^*(\wedge^\top A \otimes \wedge^\top T^*M)$. This is done by composing the pairing of Proposition \ref{prop:pairing} with that of Evens, Lu, and Weinstein \cite{elw}.

\section{Characteristic classes}\label{sec:charclasses}
Chern-Simons type characteristic classes associated to $\infty$-representations were constructed in \cite{gra-meh:vbalg}.  In the cases of the adjoint $\infty$-representation and of genuine representations, these classes coincide with those constructed by Crainic and Fernandes \cite{fer:characteristic, cra:vanest, cra-fer:secondary}. In particular, the degree $1$ characteristic class agrees with the modular class \cite{elw}.

In the $2$-term case, it was shown there that the characteristic classes are gauge-invariant, so they can be interpreted as $\VB$-algebroid invariants. We recall the construction here, and we show in Theorem \ref{thm:gauge} that the gauge-invariance property holds in full generality.

Let $A \to M$ be a Lie algebroid, and let $\cE \to M$ be a graded vector bundle that is bounded in degree. We recall the notion of \emph{$A$-superconnection}.
\begin{definition}[\cite{gra-meh:vbalg}]
An \emph{$A$-superconnection} on $\cE$ is a degree $1$ operator $\totaldiff$ on $\Omega(A;\cE)$ satisfying the Leibniz rule \eqref{eqn:leibniz}. An $A$-superconnection is called \emph{flat} if $\totaldiff^2=0$.
\end{definition}
Clearly, a flat $A$-superconnection is the same thing as an $\infty$-representation. In general, a version of Chern-Weil theory gives obstructions to the existence of $\infty$-representations. Specifically, one can choose any $A$-superconnection $\totaldiff$ and obtain the Chern-Weil forms
\[ \ch_k(\totaldiff) \defequal \str(\totaldiff^{2k}) \in \Omega^{2k}(A),\]
where $\str$ denotes the supertrace. These are straightforward generalizations of the forms considered by Quillen \cite{quillen:superconnections}, and his proof of the following statement carries over almost verbatim to the present setting.
\begin{proposition}\label{prop:ch}
For each $k$, the form $\ch_k(\totaldiff)$ is closed, and the cohomology class of $\ch_k(\totaldiff)$ is independent of $\totaldiff$.
\end{proposition}

In the case where $\totaldiff$ is an $\infty$-representation, the Chern-Weil forms $\ch_k(\totaldiff)$ obviously vanish. However, given a pair of $\infty$-representations, one can construct Chern-Simons type transgression forms, as follows.

Let $I$ be the unit interval, and consider the product Lie algebroid $A \times TI \to M \times I$. Let $\{t,\dot{t}\}$ be the canonical coordinates on $T[1]I$. Any Lie algebroid $q$-form $\xi \in \Omega^q(A \times TI)$ can be uniquely written as $\xi_0(t) + \dot{t} \xi_1(t)$, where $\xi_0$ and $\xi_1$ are $t$-dependent elements of $\Omega^q(A)$ and $\Omega^{q-1}(A)$, respectively. The Berezin integral
\begin{equation*}
     \int \xi \defequal \int_{T[1]I} \dee{t} \dee{\dot{t}} \xi = \int_0^1 \dee{t} \xi_1
\end{equation*}
defines a degree $-1$ map from $\Omega(A \times TI)$ to $\Omega(A)$. The differential on $\Omega(A\times TI)$ is
\[ d_{A \times TI} = d_A + \dot{t} \ddt,\]
and a straightforward computation shows that the equation
\begin{equation}\label{eqn:dint}
     \int d_{A \times TI} \xi + d_A \int \xi = \xi_0(1) - \xi_0(0)
\end{equation}
holds for all $\xi \in \Omega(A \times TI)$.

Let $p$ be the projection map from $M \times I$ to $M$. Given a pair of $A$-superconnections $\totaldiff_0$ and $\totaldiff_1$ on $\cE$, we can form an $(A \times TI)$-superconnection $\trans{\totaldiff_0}{\totaldiff_1}$ on $p^* \cE$, given by
\[ \trans{\totaldiff_0}{\totaldiff_1}(a) = t \totaldiff_1(a) + (1-t) \totaldiff_0(a), \]
where $a \in \Gamma(\cE)$ is viewed as a $t$-independent section of $p^* \cE$. The \emph{transgression forms} $\cs_k(\totaldiff_0,\totaldiff_1) \in \Omega^{2k-1}(A)$ are defined as
\[ \cs_k(\totaldiff_0,\totaldiff_1) \defequal \int \ch_k(\trans{\totaldiff_0}{\totaldiff_1}) = \int \str(\trans{\totaldiff_0}{\totaldiff_1}^{2k}).\]
\begin{proposition}\label{prop:dacs}
$d_A \cs_k(\totaldiff_0,\totaldiff_1) = \ch_k(\totaldiff_1) - \ch_k(\totaldiff_0)$. In particular, if $\totaldiff_0$ and $\totaldiff_1$ are $\infty$-representations, then $\cs_k(\totaldiff_0,\totaldiff_1)$ is closed.
\end{proposition}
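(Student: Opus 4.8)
The plan is to apply the integration-by-parts identity \eqref{eqn:dint} to the form $\xi \defequal \ch_k(\trans{\totaldiff_0}{\totaldiff_1}) \in \Omega^{2k}(A \times TI)$. Writing $\xi = \xi_0(t) + \dot{t}\,\xi_1(t)$ in the canonical form described just before \eqref{eqn:dint}, we have by definition $\cs_k(\totaldiff_0,\totaldiff_1) = \int \xi = \int_0^1 \dee{t} \xi_1$. Thus \eqref{eqn:dint} reads $\int d_{A\times TI}\xi + d_A \cs_k(\totaldiff_0,\totaldiff_1) = \xi_0(1) - \xi_0(0)$, and the proposition will follow once we establish two facts: (i) that $\xi$ is $d_{A\times TI}$-closed, so that the first term vanishes; and (ii) that $\xi_0(t) = \ch_k(\totaldiff_t)$, where $\totaldiff_t \defequal t\totaldiff_1 + (1-t)\totaldiff_0$, so that the boundary terms collapse to $\ch_k(\totaldiff_1) - \ch_k(\totaldiff_0)$.

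For (i) I would invoke the general closedness of Chern-Weil forms already asserted above: since $\trans{\totaldiff_0}{\totaldiff_1}$ is an honest $(A\times TI)$-superconnection, its Chern-Weil form is closed with respect to the base differential $d_{A\times TI}$, so $\int d_{A\times TI}\xi = \int 0 = 0$.

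For (ii) the key computation is to identify the curvature of $\trans{\totaldiff_0}{\totaldiff_1}$. Decomposing the superconnection as $\totaldiff_t + \dot{t}\,\ddt$, where $\totaldiff_t$ is the $t$-dependent $A$-superconnection $t\totaldiff_1 + (1-t)\totaldiff_0$ and $\dot{t}\,\ddt$ accounts for differentiation in the $TI$-direction, one squares to obtain $F = \totaldiff_t^2 + \dot{t}(\totaldiff_1 - \totaldiff_0)$; here $(\dot{t}\,\ddt)^2 = 0$ because $\dot{t}$ is odd, and the cross term produces $\dot{t}\,\ddt \totaldiff_t = \dot{t}(\totaldiff_1 - \totaldiff_0)$. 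Since $\dot{t}^2 = 0$, expanding $F^k$ shows that its $\dot{t}$-free part is exactly $(\totaldiff_t^2)^k$, whence $\xi_0(t) = \str\bigl((\totaldiff_t^2)^k\bigr) = \ch_k(\totaldiff_t)$. Evaluating at the endpoints, where $\totaldiff_t|_{t=0} = \totaldiff_0$ and $\totaldiff_t|_{t=1} = \totaldiff_1$, gives $\xi_0(1) - \xi_0(0) = \ch_k(\totaldiff_1) - \ch_k(\totaldiff_0)$, completing the main identity. The ``in particular'' statement is then immediate: if $\totaldiff_0$ and $\totaldiff_1$ are flat, then $\ch_k(\totaldiff_0) = \str(0) = \ch_k(\totaldiff_1) = 0$, so $\cs_k(\totaldiff_0,\totaldiff_1)$ is closed.

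The main obstacle I anticipate is the curvature computation in step (ii): one must carefully justify the splitting $\trans{\totaldiff_0}{\totaldiff_1} = \totaldiff_t + \dot{t}\,\ddt$ and track the Koszul signs introduced by the odd variable $\dot{t}$, in order to confirm that the $\dot{t}$-free component of $\str(F^k)$ is precisely $\ch_k(\totaldiff_t)$ with the correct sign. Once the sign bookkeeping is settled, the remainder is a direct application of \eqref{eqn:dint} together with standard Chern-Weil closedness.
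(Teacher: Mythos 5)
Your proof is correct and takes essentially the same route as the paper's: both apply \eqref{eqn:dint} to $\xi = \ch_k(\trans{\totaldiff_0}{\totaldiff_1})$, use closedness of Chern--Weil forms to kill the $\int d_{A\times TI}\xi$ term, and extract the $\dot{t}$-free part of the curvature power, your splitting $\trans{\totaldiff_0}{\totaldiff_1} = \totaldiff_t + \dot{t}\,\ddt$ being exactly the paper's formula \eqref{eqn:t2} in compact form, since $\totaldiff_t^2 = t^2\totaldiff_1^2 + (1-t)^2\totaldiff_0^2 + t(1-t)[\totaldiff_0,\totaldiff_1]$. The only cosmetic difference is that you identify $\xi_0(t) = \ch_k(\totaldiff_t)$ for all $t$, whereas the paper evaluates only at the endpoints $t=0,1$.
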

\begin{proof}
     Let $\xi \defequal \ch_k(\trans{\totaldiff_0}{\totaldiff_1}) = \str(\trans{\totaldiff_0}{\totaldiff_1}^{2k}) \in \Omega^{2k}(A \times TI)$. By Proposition \ref{prop:ch}, we have that $d_{A \times TI} \xi = 0$. Equation \eqref{eqn:dint} then implies that 
\begin{equation}\label{eqn:dacs}
d_A \cs_k(\totaldiff_0,\totaldiff_1) = \xi_0(1) - \xi_0(0).     
\end{equation}
To compute the right side of \eqref{eqn:dacs}, we first calculate
\begin{equation}\label{eqn:t2}
  \trans{\totaldiff_0}{\totaldiff_1}^2 = \dot{t}(\totaldiff_1 - \totaldiff_0) + t^2 \totaldiff_1^2 + (1-t)^2 \totaldiff_0^2 + t(1-t)[\totaldiff_0,\totaldiff_1],    
\end{equation}
so that
\begin{align*}
     \trans{\totaldiff_0}{\totaldiff_1}^2(1) &= \dot{t}(\totaldiff_1 - \totaldiff_0) + \totaldiff_1^2,\\
\trans{\totaldiff_0}{\totaldiff_1}^2(0) &= \dot{t}(\totaldiff_1 - \totaldiff_0) + \totaldiff_0^2.
\end{align*}
It follows that
\begin{align*}
     \trans{\totaldiff_0}{\totaldiff_1}^{2k}(1) &= \totaldiff_1^{2k} + \bigO(\dot{t}),\\
\trans{\totaldiff_0}{\totaldiff_1}^{2k}(0) &= \totaldiff_0^{2k} + \bigO(\dot{t}).
\end{align*}
We conclude that the right side of \eqref{eqn:dacs} is $\str(\totaldiff_1^{2k}) - \str(\totaldiff_0^{2k}) = \ch_k(\totaldiff_1) - \ch_k(\totaldiff_0)$.
\end{proof}

\begin{remark}
	If $\totaldiff_0$ and $\totaldiff_1$ are $\infty$-representations, then \eqref{eqn:t2} reduces to
\begin{equation*}
	  \trans{\totaldiff_0}{\totaldiff_1}^2 = \dot{t}(\totaldiff_1 - \totaldiff_0) + t(1-t)[\totaldiff_0,\totaldiff_1].
\end{equation*}
Using the fact that $\totaldiff_0$ and $\totaldiff_1$ both commute with $[\totaldiff_0, \totaldiff_1]$, we see that
\begin{equation*}
	  \trans{\totaldiff_0}{\totaldiff_1}^{2k} = k\dot{t}t^{k-1}(t-1)^{k-1}(\totaldiff_1 - \totaldiff_0)[\totaldiff_0,\totaldiff_1]^{k-1} + t^k(1-t)^k[\totaldiff_0,\totaldiff_1]^k.	
\end{equation*}
The Berezin integral of $\trans{\totaldiff_0}{\totaldiff_1}^{2k}$ can then be explicitly computed, giving us the simple formula
\begin{equation}
\cs_k(\totaldiff_0,\totaldiff_1) = P_k \str\left( (\totaldiff_1 - \totaldiff_0) [\totaldiff_0, \totaldiff_1]^{k-1} \right),
\end{equation}
where the constant $P_k$ is
\begin{equation*}
	P_k = k\int_0^1 t^{k-1}(1-t)^{k-1} \dee t = \frac{k!(k-1)!}{(2k-1)!}.
\end{equation*}
\end{remark}

The following two propositions describe important properties satisfied by the forms $\cs_k(\totaldiff_0, \totaldiff_1)$. The first is a sort of ``triangle identity'', and the second asserts that the cohomology classes are stable under $\Omega(A)$-module automorphisms.
\begin{proposition}\label{prop:triangle}
     Let $\totaldiff_0$, $\totaldiff_1$, and $\totaldiff_2$ be $\infty$-representations of $A$ on $\cE$. Then
\[ \cs_k(\totaldiff_0, \totaldiff_1) + \cs_k(\totaldiff_1, \totaldiff_2) - \cs_k(\totaldiff_0, \totaldiff_2)\]
is exact.
\end{proposition}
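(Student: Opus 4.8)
The plan is to exhibit the three transgression forms as the three edges of a single two-parameter interpolation, and then apply a two-dimensional version of the Berezin--Stokes identity \eqref{eqn:dint} to identify their alternating sum with $d_A$ of an explicit form. This is the direct analogue of the classical argument that Chern--Simons forms define an additive cocycle on the nerve of the space of connections: the ``triangle'' in the statement is literally the boundary of a $2$-simplex of $A$-superconnections.

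Concretely, let $R = \{(s,t) : 0 \le t \le s \le 1\}$ be the standard $2$-simplex, with vertices $(0,0)$, $(1,0)$, $(1,1)$, and consider the product Lie algebroid $A \times TR \to M \times R$ with projection $p : M \times R \to M$. On $p^*\cE$ I would define the $(A \times TR)$-superconnection given on $(s,t)$-independent sections by $\mathbb{D} = (1-s)\totaldiff_0 + (s-t)\totaldiff_1 + t\totaldiff_2$, i.e.\ barycentric interpolation of the three $\infty$-representations. The point is that $\mathbb{D}$ restricts on the three edges of $R$ to the one-parameter families already used to define the transgression forms: on $t=0$ it is $\trans{\totaldiff_0}{\totaldiff_1}$, on $s=1$ it is $\trans{\totaldiff_1}{\totaldiff_2}$, and on the diagonal $s=t$ it is $\trans{\totaldiff_0}{\totaldiff_2}$. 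Hence the edge Berezin integrals of $\ch_k(\mathbb{D})$ reproduce $\cs_k(\totaldiff_0,\totaldiff_1)$, $\cs_k(\totaldiff_1,\totaldiff_2)$, and $\cs_k(\totaldiff_0,\totaldiff_2)$, and when $\partial R$ is given the boundary orientation the diagonal is traversed from $(1,1)$ to $(0,0)$, which—using the antisymmetry $\cs_k(\totaldiff_2,\totaldiff_0) = -\cs_k(\totaldiff_0,\totaldiff_2)$—contributes the minus sign appearing in the statement. Thus the oriented boundary sum is exactly $\cs_k(\totaldiff_0,\totaldiff_1) + \cs_k(\totaldiff_1,\totaldiff_2) - \cs_k(\totaldiff_0,\totaldiff_2)$.

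It then remains to invoke a two-parameter generalization of \eqref{eqn:dint}: for the fiber Berezin integral $\int_R$ over $T[1]R$ one expects an identity of the schematic form $\int_{\partial R} \xi = \pm\, d_A \int_R \xi + \int_R d_{A \times TR}\,\xi$, with the edge terms carrying the boundary orientation. Taking $\xi = \ch_k(\mathbb{D}) = \str((\mathbb{D}^2)^k)$ and using that Chern--Weil forms are closed, the last term vanishes, so the boundary sum equals $d_A$ of the interior integral $\int_R \ch_k(\mathbb{D})$ and is therefore exact. Note that flatness of the $\totaldiff_i$ is not strictly needed for this step: by Proposition \ref{prop:dacs} the combination is $d_A$-closed for arbitrary superconnections, and the genuine content here is the explicit primitive; flatness is what makes the individual $\cs_k$ closed, so that the result records an additivity relation among classes in $H^{2k-1}(A)$.

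The hard part will be establishing, and correctly orienting, the two-dimensional Berezin--Stokes identity over the simplex-with-corners $R$, since \eqref{eqn:dint} as stated is only for $T[1]I$; this amounts to iterating the one-dimensional integral while tracking the odd coordinates $\dot{s},\dot{t}$ and the graded signs produced by $\str$, and checking that the corner contributions cancel. A purely computational alternative, avoiding the two-dimensional integral, would be to start from the explicit formula $\cs_k(\totaldiff_0,\totaldiff_1) = P_k \str((\totaldiff_1-\totaldiff_0)[\totaldiff_0,\totaldiff_1]^{k-1})$ of the preceding remark and verify directly—using that $d_A$ intertwines with $\str$ via the induced covariant derivative and the Bianchi identity $[\totaldiff_i,\totaldiff_i^2]=0$—that the alternating sum is $d_A$ of a supertrace expression. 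I would expect the simplex argument to be far cleaner, with the explicit-formula computation serving mainly as a consistency check on the signs.
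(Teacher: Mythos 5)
Your simplex argument is sound and would yield a correct proof, but it packages the homotopy differently from the paper. The paper never formulates a two-dimensional Berezin--Stokes identity: instead it iterates the one-dimensional machinery, forming the \emph{second transgression} $\xi \defequal \cs_k(\trans{\totaldiff_0}{\totaldiff_1}, \trans{\totaldiff_0}{\totaldiff_2}) \in \Omega^{2k-1}(A \times TI)$, applying Proposition \ref{prop:dacs} over the Lie algebroid $A \times TI$ (note this uses the \emph{non-flat} case of that proposition, since $\trans{\totaldiff_0}{\totaldiff_1}$ is not flat even when the $\totaldiff_i$ are), and then integrating once more via \eqref{eqn:dint}; the endpoint evaluations $\xi_0(1) = \cs_k(\totaldiff_1,\totaldiff_2)$ and $\xi_0(0) = 0$ are computed by hand. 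Unwinding, the paper's two-parameter superconnection $st\,\totaldiff_2 + (1-s)t\,\totaldiff_1 + (1-t)\totaldiff_0$ is exactly your barycentric $\mathbb{D}$ pulled back along the map collapsing the edge $t=0$ of the square onto the vertex $\totaldiff_0$ of your simplex $R$ — the collapsed edge is why the paper's fourth boundary term $\xi_0(0)$ vanishes, matching your triangle having only three edges. What each approach buys: yours makes the simplicial cocycle structure transparent (and would generalize directly to higher-simplex identities among transgressions), while the paper's reduces everything to the already-proven one-dimensional identity, requiring no new integration theory.

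The one genuine gap in your writeup is the step you yourself flag: the two-dimensional Berezin--Stokes identity over the simplex-with-corners is asserted, not established, and it is the entire analytic content of the proof. It is fillable, and in fact the cleanest way to fill it is precisely the paper's iteration — Fubini for the Berezin integral over $T[1]I \times T[1]I$ together with \eqref{eqn:dint} applied in each variable separately, after which the corner terms you worry about never appear, since each edge contribution arises as a one-dimensional integral evaluated at endpoints. So your ``hard part'' and the paper's proof are the same computation viewed from opposite ends. Two of your side remarks check out and deserve confirmation: the antisymmetry $\cs_k(\totaldiff_1,\totaldiff_0) = -\cs_k(\totaldiff_0,\totaldiff_1)$ holds for arbitrary superconnections by the substitution $t \mapsto 1-t$, $\dot{t} \mapsto -\dot{t}$ in the Berezin integral (you do not need the explicit flat-case formula for it), and flatness of the $\totaldiff_i$ is indeed not needed for exactness of the alternating sum, only for the closedness of the individual $\cs_k$ — the paper's displayed computation assumes flatness merely to shorten the expansion of $\trans{\trans{\totaldiff_0}{\totaldiff_1}}{\trans{\totaldiff_0}{\totaldiff_2}}^2$.
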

\begin{proof}
     Consider the transgression form $\xi \defequal \cs_k(\trans{\totaldiff_0}{\totaldiff_1}, \trans{\totaldiff_0}{\totaldiff_2}) \in \Omega^{2k-1}(A \times TI)$. By Proposition \ref{prop:dacs}, we have that
\[ d_{A \times TI} \xi = \ch_k(\trans{\totaldiff_0}{\totaldiff_2}) - \ch_k(\trans{\totaldiff_0}{\totaldiff_1}).\]
Applying the Berezin integral to both sides and using \eqref{eqn:dint}, we get
\[ -d_A \int \xi +  \xi_0(1) - \xi_0(0) = \cs_k(\totaldiff_0,\totaldiff_2) - \cs_k(\totaldiff_0,\totaldiff_1).\]
To complete the proof, we need to compute the terms $\xi_0(1)$ and $\xi_0(0)$.

Letting $s$ be the coordinate on the second copy of $I$, we write
\[ \trans{\trans{\totaldiff_0}{\totaldiff_1}}{\trans{\totaldiff_0}{\totaldiff_2}}(a) = st \totaldiff_2(a) + (1-s)t \totaldiff_1(a) + (1-t)\totaldiff_0(a)\]
for $a$ an $s$- and $t$-independent section of the pullback of $\cE$ to $M \times I \times I$. Then
\begin{equation*}
     \begin{split}
\trans{\trans{\totaldiff_0}{\totaldiff_1}}{\trans{\totaldiff_0}{\totaldiff_2}}^2 =& \dot{s} t(\totaldiff_2 - \totaldiff_1) + \dot{t}(s \totaldiff_2 + (1-s)\totaldiff_1 - \totaldiff_0) + s(1-s)t^2[\totaldiff_2,\totaldiff_1]          \\
 &+ st(1-t)[\totaldiff_2,\totaldiff_0] + (1-s)t(1-t)[\totaldiff_1,\totaldiff_0].
     \end{split}
\end{equation*}
We observe that integration with respect to $s$ and $\dot{s}$ commutes with evaluation of $t$ and $\dot{t}$, so we may evaluate first. We see that
\begin{align*}
  \trans{\trans{\totaldiff_0}{\totaldiff_1}}{\trans{\totaldiff_0}{\totaldiff_2}}^2|_{t=1} &= \dot{s} (\totaldiff_2 - \totaldiff_1) + \dot{t}(s \totaldiff_2 + (1-s)\totaldiff_1 - \totaldiff_0) + s(1-s)[\totaldiff_2,\totaldiff_1], \\    
\trans{\trans{\totaldiff_0}{\totaldiff_1}}{\trans{\totaldiff_0}{\totaldiff_2}}^2|_{t=0} &= \dot{t}(s \totaldiff_2 + (1-s)\totaldiff_1 - \totaldiff_0),
\end{align*}
so that
\begin{align*}
  \trans{\trans{\totaldiff_0}{\totaldiff_1}}{\trans{\totaldiff_0}{\totaldiff_2}}^{2k}|_{t=1} &= (\dot{s} (\totaldiff_2 - \totaldiff_1) + s(1-s)[\totaldiff_2,\totaldiff_1])^k + \bigO(\dot{t}), \\    
\trans{\trans{\totaldiff_0}{\totaldiff_1}}{\trans{\totaldiff_0}{\totaldiff_2}}^{2k}|_{t=0} &=  \bigO(\dot{t}).
\end{align*}
Therefore,
\[ \xi_0(1) = \int \str (\dot{s} (\totaldiff_2 - \totaldiff_1) + s(1-s)[\totaldiff_2,\totaldiff_1])^k = \int \str (\trans{\totaldiff_1}{\totaldiff_2}^{2k}) = \cs_k(\totaldiff_1,\totaldiff_2)\]
and $\xi_0(0) = 0$.
\end{proof}

\begin{proposition}\label{prop:auto}
Let $u_r$ be a smooth path of degree-preserving $\Omega(A)$-module automorphisms of $\Omega(A;\cE)$ such that $u_0 = \id$.  Let $\totaldiff$ be an $\infty$-representation of $A$ on $\cE$, and let $\totaldiff_r \defequal u_r^{-1} \totaldiff_0 u_r$.  Then $\cs_k(\totaldiff_0,\totaldiff_1)$ is exact.
\end{proposition}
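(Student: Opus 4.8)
The plan is to prove exactness by the fundamental theorem of calculus in the parameter $r$: I will show that $\frac{d}{dr}\cs_k(\totaldiff_0,\totaldiff_r)$ is $d_A$-exact for every $r$ and then integrate over $[0,1]$. The first step is to record two simplifications coming from the fact that each $\totaldiff_r \defequal u_r^{-1}\totaldiff_0 u_r$ is again flat, since $\totaldiff_r^2 = u_r^{-1}\totaldiff_0^2 u_r = 0$. Writing $\phi_r \defequal \totaldiff_r - \totaldiff_0$, flatness of both $\totaldiff_0$ and $\totaldiff_r$ forces $[\totaldiff_0,\totaldiff_r] = [\totaldiff_0,\phi_r] = -\phi_r^2$, so the explicit formula from the preceding remark collapses to the clean expression $\cs_k(\totaldiff_0,\totaldiff_r) = (-1)^{k-1}P_k\,\str(\phi_r^{2k-1})$. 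Differentiating $u_r^{-1}\totaldiff_0 u_r$ gives $\dot{\totaldiff}_r = [\totaldiff_r, v_r]$, where $v_r \defequal u_r^{-1}\dot{u}_r$ is a degree-zero $\Omega(A)$-linear endomorphism, i.e.\ an element of $\Omega(A;\End\cE)$; in particular $\dot{\phi}_r = \dot{\totaldiff}_r = [\totaldiff_0,v_r] + [\phi_r,v_r]$.

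Next I would differentiate the simplified formula. Graded cyclicity of the supertrace, together with the observation that the relevant permutation signs $(-1)^{j(j+1)}$ are all trivial, reduces $\frac{d}{dr}\str(\phi_r^{2k-1})$ to $(2k-1)\,\str(\dot{\phi}_r\,\phi_r^{2k-2})$. I would then show that only the $[\totaldiff_0,v_r]$ piece of $\dot{\phi}_r$ survives: the contribution of $[\phi_r,v_r]$ is $\str(\phi_r v_r\phi_r^{2k-2}) - \str(v_r\phi_r^{2k-1})$, and a single application of cyclicity (moving the leading $\phi_r$ past an even-degree factor) identifies the two terms, so this vanishes.

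The crux is then to exhibit $\str([\totaldiff_0,v_r]\phi_r^{2k-2})$ as $d_A$ of something. For this I would use the compatibility of the supertrace with the induced superconnection on $\End\cE$, namely $d_A\str\Phi = \str[\totaldiff_0,\Phi]$ for $\Phi \in \Omega(A;\End\cE)$, applied to $\Phi = v_r\phi_r^{2k-2}$. Since $\totaldiff_0$ acts as a graded derivation, this produces $\str([\totaldiff_0,v_r]\phi_r^{2k-2}) + \str(v_r[\totaldiff_0,\phi_r^{2k-2}])$, and the second term drops out because $[\totaldiff_0,\phi_r^{2k-2}]$ telescopes, via $[\totaldiff_0,\phi_r] = -\phi_r^2$, into a sum of an even number of alternating-sign copies of $\phi_r^{2k-1}$ and hence is zero. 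Combining the steps gives $\frac{d}{dr}\cs_k(\totaldiff_0,\totaldiff_r) = (-1)^{k-1}(2k-1)P_k\, d_A\str(v_r\phi_r^{2k-2})$, which is manifestly exact.

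Finally, since $\phi_0 = 0$ gives $\cs_k(\totaldiff_0,\totaldiff_0) = 0$, the fundamental theorem of calculus together with the fact that $d_A$ commutes with $\int_0^1 dr$ yields $\cs_k(\totaldiff_0,\totaldiff_1) = (-1)^{k-1}(2k-1)P_k\, d_A\int_0^1 \str(v_r\phi_r^{2k-2})\,dr$, which is exact. I expect the main obstacle to be the sign bookkeeping in the graded-cyclicity steps --- verifying that the permutation signs in the supertrace are indeed all trivial and that the telescoping commutator $[\totaldiff_0,\phi_r^{2k-2}]$ genuinely cancels --- rather than any conceptual difficulty. The compatibility identity $d_A\str = \str\,\totaldiff_0$ is the one external ingredient, and it is the standard super-analogue of the trace/Bianchi identity from Chern--Weil theory.
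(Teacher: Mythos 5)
Your proof is correct and takes essentially the same route as the paper: both reduce $\cs_k(\totaldiff_0,\totaldiff_r)$ to a constant multiple of $\str(\theta_r^{2k-1})$ using $[\totaldiff_0,\theta_r]=-\theta_r^2$, compute $\deed{\totaldiff_r}{r} = \left[\totaldiff_r, u_r^{-1}\deed{u_r}{r}\right]$, and invoke Quillen's identity $\str\left(\left[\totaldiff,\Phi\right]\right) = d_A \str(\Phi)$ to show the $r$-derivative is exact before integrating. The only (cosmetic) difference is that the paper packages the integrand as the single commutator $\left[\totaldiff_r, u_r^{-1}\deed{u_r}{r}\theta_r^{2k-2}\right]$ via $[\totaldiff_r,\theta_r^2]=0$, whereas you split off the $[\phi_r,v_r]$ piece by cyclicity of the supertrace and commute with $\totaldiff_0$ instead.
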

\begin{proof}
     Write $\totaldiff_r = \totaldiff_0 + \theta_r$, where $\theta_r$ is a path of $\End(\cE)$-valued $A$-forms. Since $\totaldiff_r^2=0$, we have that 
\begin{equation}\label{eqn:thetar}
[\totaldiff_0, \totaldiff_r] = [\totaldiff_0, \theta_r] = - \theta_r^2.     
\end{equation}
From \eqref{eqn:t2} we have
\[   \trans{\totaldiff_0}{\totaldiff_r}^2 = \dot{t}\theta_r + t(1-t)[\totaldiff_0,\totaldiff_r] = \dot{t}\theta_r - t(1-t)\theta_r^2,\]    
so
\[   \trans{\totaldiff_0}{\totaldiff_r}^{2k} = k(t^2-t)^{k-1} \dot{t}\theta_r^{2k-1} + (t^2-t)^k\theta_r^{2k}.\]    
It follows that $\cs_k(\totaldiff_0, \totaldiff_r)$ is proportional to $\str(\theta_r^{2k-1})$, so that $\deed{}{r} \cs_k(\totaldiff_0, \totaldiff_r)$ is proportional to
\begin{equation}\label{eqn:ddrcs}
 (2k-1) \str\left(\deed{\theta_r}{r} \theta_r^{2k-2}\right).     
\end{equation}
We wish to show that \eqref{eqn:ddrcs} is exact. First, we compute that
\begin{equation*}
     \begin{split}
          \deed{\theta_r}{r} = \deed{\totaldiff_r}{r} &= \deed{u_r^{-1}}{r} \totaldiff_0 u_r + u_r^{-1} \totaldiff_0 \deed{u_r}{r} \\
          &= \deed{u_r^{-1}}{r} u_r \totaldiff_r + \totaldiff_r u_r^{-1} \deed{u_r}{r} \\
          &= \left[\totaldiff_r, u_r^{-1} \deed{u_r}{r}\right].
     \end{split}
\end{equation*}
Using the property $[\totaldiff_r, \theta_r^2] = 0$, which follows from \eqref{eqn:thetar}, we deduce that
\[ \deed{\theta_r}{r} \theta_r^{2k-2} = \left[ \totaldiff_r, u_r^{-1} \deed{u_r}{r} \theta^{2k-2} \right].\]
Therefore, we have that \eqref{eqn:ddrcs} equals
\[ (2k-1) \str\left( \left[\totaldiff_r, u_r^{-1} \deed{u_r}{r} \theta^{2k-2}\right] \right) = (2k-1) d_A \str\left( u_r^{-1} \deed{u_r}{r} \theta^{2k-2}\right), \]
which is exact, as desired.
\end{proof}

The transgression form construction allows us to define characteristic classes associated to a single $\infty$-representation $\totaldiff$, as follows. Choose a metric on $E_i$ for each $i$. We may use the metric to obtain an adjoint operator $\totaldiff^\adjoint$ on $\Omega(A;\cE)$, given by the equation
\[ d_A \innerprod{\omega_1, \omega_2} = \innerprod{\totaldiff \omega_1, \omega_2} + (-1)^{|\omega_1|}\innerprod{\omega_1, \totaldiff^\adjoint \omega_2}.\]
 The operator $\totaldiff^\adjoint$ satisfies the Leibniz rule and squares to zero, but it is generally not homogeneous of degree $1$; we say that it is a \emph{nonhomogeneous $\infty$-representation} or \emph{nonhomogeneous flat $A$-superconnection}. We observe that the definitions and proofs from earlier in this section carry over verbatim to the nonhomogeneous case, with the only difference being that the Chern-Weil and Chern-Simons forms may be nonhomogeneous.

Given an $\infty$-representation $\totaldiff$, define the Chern-Simons forms associated to $\totaldiff$ as
\[ \cs_k(\totaldiff) \defequal \cs_k(\totaldiff, \totaldiff^\adjoint).\]
The following theorem implies that the cohomology classes of $\cs_k(\totaldiff)$ are well-defined invariants of Lie algebroid modules.
\begin{thm}\label{thm:gauge}
     The cohomology classes $[\cs_k(\totaldiff)]$ are independent of the choice of metric and invariant with respect to gauge transformations. 
\end{thm}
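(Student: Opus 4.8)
The plan is to establish the two assertions separately, in both cases reducing everything to the transgression machinery of Propositions \ref{prop:dacs}, \ref{prop:triangle}, and \ref{prop:auto}, used in the nonhomogeneous setting.

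For \emph{metric independence}, I would fix $\totaldiff$ and let $g_r$ be a smooth path of metrics (the space of metrics on each $E_i$ is convex, so any two are joined by such a path), with associated adjoints $\totaldiff_r \defequal \totaldiff^\adjoint$ computed from $g_r$. Each $\totaldiff_r$ is a nonhomogeneous flat $A$-superconnection, so $\theta_r \defequal \totaldiff_r - \totaldiff$ is an $\End(\cE)$-valued $A$-form with $[\totaldiff,\theta_r] = -\theta_r^2$, exactly as in the proof of Proposition \ref{prop:auto}. The key computation is to differentiate the defining equation of the adjoint in $r$. Writing $g_r = g_0(\Phi_r\,\cdot,\cdot)$ with $\Phi_r$ positive and $g_0$-symmetric, and setting $\Psi_r \defequal \Phi_r^{-1}\deed{\Phi_r}{r}$, a short manipulation of $d_A\innerprod{\omega_1,\omega_2}_r = \innerprod{\totaldiff\omega_1,\omega_2}_r + (-1)^{|\omega_1|}\innerprod{\omega_1,\totaldiff_r\omega_2}_r$ yields $\deed{\theta_r}{r} = [\totaldiff_r,\Psi_r]$, where $\Psi_r \in \Gamma(\End(\cE))$ is degree-preserving and $g_r$-self-adjoint. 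This is precisely the form of variation appearing in the proof of Proposition \ref{prop:auto}; the identical argument (using $[\totaldiff_r,\theta_r^2]=0$ and $\str[\totaldiff_r,\cdot] = d_A\str(\cdot)$ via \cite[Proposition 2]{quillen:superconnections}) then shows that $\deed{}{r}\cs_k(\totaldiff,\totaldiff_r)$ is exact. Integrating in $r$ gives that the forms $\cs_k(\totaldiff,\totaldiff^\adjoint)$ for two metrics differ by an exact form, which is metric independence.

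For \emph{gauge invariance}, I would fix a single metric $g$ (legitimate by the part just proved) and, given a gauge transformation $u$, set $\totaldiff' \defequal u^{-1}\totaldiff u$ and compute all four flat superconnections $\totaldiff, \totaldiff^\adjoint, \totaldiff', \totaldiff'^\adjoint$ using $g$. Two applications of the triangle identity (Proposition \ref{prop:triangle}, in its nonhomogeneous form) give, in cohomology,
\[ [\cs_k(\totaldiff',\totaldiff'^\adjoint)] = [\cs_k(\totaldiff',\totaldiff)] + [\cs_k(\totaldiff,\totaldiff^\adjoint)] + [\cs_k(\totaldiff^\adjoint,\totaldiff'^\adjoint)], \]
so it suffices to kill the first and third terms. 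Since $u$ is a degree-preserving $\Omega(A)$-module automorphism with $\mu u = \mu$, the operator $N \defequal u - \id$ raises $A$-form degree by at least one and is therefore nilpotent; hence $u = \exp(\log u)$ with $\log u$ a finite, degree-preserving sum, and $u_r \defequal \exp(r\log u)$ is a path of degree-preserving automorphisms from $\id$ to $u$. Proposition \ref{prop:auto} applied to $\totaldiff$ along $u_r$ shows $\cs_k(\totaldiff,\totaldiff')$ is exact, which kills the first term (using $\cs_k(\totaldiff',\totaldiff) \equiv -\cs_k(\totaldiff,\totaldiff')$, itself a consequence of the triangle identity together with $\cs_k(\totaldiff,\totaldiff)=0$). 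For the third term, the metric adjoint satisfies $u^* = \id + N^*$ with $N^*$ again nilpotent (it too raises $A$-form degree by at least one), and $\totaldiff'^\adjoint = u^*\totaldiff^\adjoint (u^*)^{-1}$; writing $w_r \defequal \exp(-r\log u^*)$, a path of $\Omega(A)$-module automorphisms from $\id$ to $(u^*)^{-1}$, Proposition \ref{prop:auto} in its nonhomogeneous form (as $u^*$ need not be degree-preserving) applied to $\totaldiff^\adjoint$ shows $\cs_k(\totaldiff^\adjoint,\totaldiff'^\adjoint)$ is exact. Combining, $[\cs_k(\totaldiff')] = [\cs_k(\totaldiff)]$.

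The main obstacle I anticipate is the metric-independence computation: unlike the gauge case, a change of metric does \emph{not} conjugate $\totaldiff^\adjoint$ by an automorphism, because $\totaldiff$ fails to commute with $\Phi_r$, so Proposition \ref{prop:auto} cannot be cited directly. The crux is the identity $\deed{\theta_r}{r} = [\totaldiff_r,\Psi_r]$, which recasts the metric variation as an ``inner'' variation of the adjoint and thereby feeds into Quillen's transgression estimate. A secondary point requiring care is the invocation of the nonhomogeneous form of Proposition \ref{prop:auto} for the non-degree-preserving automorphism $u^*$, together with the verification that $u$ and $u^*$ are unipotent and hence lie on smooth paths through the identity.
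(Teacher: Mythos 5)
Your proposal is correct, and its overall skeleton --- paths of metrics and of gauge transformations, the triangle identity of Proposition~\ref{prop:triangle}, and the conjugation-invariance of Proposition~\ref{prop:auto}, all invoked in their nonhomogeneous forms --- is the same as the paper's. In the gauge-invariance half you reproduce the paper's argument almost exactly: the paper connects $u$ to $\id$ using the affine structure of the space of gauge transformations (i.e.\ $u_r = \id + r(u-\id)$, which is invertible because $u - \id$ raises $A$-form degree and is hence nilpotent), where you use exponential paths; both are legitimate. Your explicit observation that $u^\adjoint$ fails to be degree-preserving, so that only the nonhomogeneous version of Proposition~\ref{prop:auto} applies to kill $\cs_k(\totaldiff^\adjoint, \totaldiff'^\adjoint)$, is a genuine point of care that the paper absorbs into its blanket remark that the whole section carries over verbatim to nonhomogeneous flat $A$-superconnections.

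The one real divergence is in the metric-independence half, and there the ``main obstacle'' you anticipate does not exist. A change of metric \emph{does} conjugate the adjoint: defining $u_r \in \Gamma(\End \cE)$ by $\innerprod{a,a'}_r = \innerprod{u_r(a),a'}_0$, one checks directly from the defining equation of $\totaldiff^\adjoint$, using only $\Omega(A)$-bilinearity of the pairing and self-adjointness of $u_r$ with respect to $\innerprod{\cdot,\cdot}_0$, that $\totaldiff^{\adjoint_r} = u_r^{-1}\totaldiff^{\adjoint_0}u_r$; no commutation of $\totaldiff$ with $u_r$ is required (this is the superconnection analogue of the classical fact that for a fixed flat connection the adjoints with respect to two metrics $h_0$ and $h_r = h_0 u_r$ are conjugate by $u_r$). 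This is precisely the paper's route, which then cites Propositions~\ref{prop:triangle} and~\ref{prop:auto} and is done. Moreover, your substitute identity $\deed{\theta_r}{r} = [\totaldiff_r, \Psi_r]$ with $\Psi_r = \Phi_r^{-1}\deed{\Phi_r}{r}$ is in fact \emph{equivalent} to the conjugation you disclaim: it is a linear ODE in $r$ whose unique solution with value $\totaldiff^{\adjoint_0}$ at $r=0$ is $\totaldiff^{\adjoint_r} = \Phi_r^{-1}\totaldiff^{\adjoint_0}\Phi_r$. So your metric argument is sound --- it reruns the internal computation of the proof of Proposition~\ref{prop:auto} inline, and even saves one application of the triangle identity since the first slot of $\cs_k(\totaldiff, \totaldiff^{\adjoint_r})$ stays fixed --- but it is redundant, and the stated justification for the detour (that $\totaldiff$ fails to commute with $\Phi_r$, so Proposition~\ref{prop:auto} cannot be cited) is mistaken.
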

\begin{proof}
The space of metrics is convex, hence path-connected. Given a path of metrics $\innerprod{\cdot,\cdot}_r$, let $u_r \in \End(\cE)$ be given by
\[ \innerprod{a, a'}_r = \innerprod{u_r(a),a'}_0.\]
Then the corresponding adjoint operators satisfy the equation
\[ \totaldiff^{\adjoint_r} = u_r^{-1} \totaldiff^{\adjoint_0} u_r.\]
Metric-independence then follows directly from Propositions \ref{prop:triangle} and \ref{prop:auto}.

Similarly, gauge-invariance follows from Propositions \ref{prop:triangle} and \ref{prop:auto}. For this, we use the fact that the space of gauge transformations is $\bigoplus_{i,k} \Omega^i(A) \otimes \Hom(E_k, E_{k-i})$, which is path-connected.
\end{proof}

\bibliographystyle{amsalpha}
\bibliography{QVB-bib}

\end{document}